\documentclass[11pt]{amsart}

\usepackage[british]{babel}

\usepackage[latin1]{inputenc}
\usepackage{tabularx}
\usepackage{amscd}
\usepackage{times}
\usepackage[T1]{fontenc}

\newcommand{\PP}{\mathbb{P}}
\newcommand{\Gr}{\mathbb{G}}

\newcommand{\ZZ}{\mathbb{Z}}

\usepackage[titletoc,toc,title]{appendix}
\usepackage{amsfonts}
\usepackage{amsmath}
\usepackage{amsthm}
\usepackage{euscript}
\usepackage{amssymb}
\usepackage{enumerate}
\usepackage{hyperref}
\usepackage{mathrsfs}
\usepackage{xypic}
\usepackage{stmaryrd}
\xyoption{all}

\theoremstyle{plain}
\newtheorem{thm}{Theorem}[section]
\newtheorem{cor}[thm]{Corollary}

\newtheorem{prop}[thm]{Proposition}
\theoremstyle{definition}

\theoremstyle{remark}
\newtheorem{rem}{Remark}[section]

\numberwithin{equation}{section}


\newcommand{\C}{\mathbb C}

\newcommand{\TT}{\mathbb{T}}



\newcommand{\NN}{\mathcal{N}}

\SelectTips {cm}{}





\begin{document}

\title[Osculating behavior of Kummer surface in $\PP^5$]{Osculating behavior of  Kummer surface in $\PP^5$}

\author
{Emilia Mezzetti}
\address{Dipartimento di Matematica e Geoscienze,
  Universit\`a degli studi di Trieste, Via Valerio 12/1,
34127 Trieste -- Italy\\
\url{mezzette@units.it}
}
\thanks{Emilia Mezzetti is member of INdAM - GNSAGA and is supported by PRIN
``Geometria delle variet\`a algebriche'' and by
FRA, Fondi di Ricerca di Ateneo, Universit\`a di Trieste}

\subjclass[2010]{14J25, 14N05, 32J25, 53A20}
 \keywords{Kummer surface, osculating space, Laplace equation, second fundamental form}

\begin{abstract}
In an article of 1967 \cite{E67} W. Edge gave a description of some beautiful geometric properties of the Kummer surface complete intersection of three quadrics in $\PP^5$. Working on it, R. Dye proved that all its osculating spaces  have dimension  less than the expected $5$ (\cite{D82}, \cite{D92}).  Here we discuss these results, also at the light of some recent result about varieties with hypo-osculating behaviour. 
\end{abstract}
\maketitle

\section{Introduction}

The Kummer surface is a classical topic, whose  rich  and beautiful geometry continues to attract the attention of mathematicians. Introduced by Kummer in 1864 \cite{K}, it was extensively studied by Hudson in his book \cite{H}, among many others. In more recent times  we only mention \cite{GS}, where the modern point of view is well explained.

Actually the term \lq\lq\thinspace Kummer surface'' is now used to denote a more general object than a quartic surface in $\PP^3$ with sixteen nodes, as originally introduced. Let $A$ be an Abelian surface, let $\iota$ 
 be the involution $\iota : A \rightarrow A$, $a \rightarrow -a$. The quotient surface with respect to this involution  $A/\iota$ has
sixteen singular points of type $A1$, which are the images, under the quotient map,
of the sixteen points  of order two in $A$. The Kummer surface of $A$, $Km(A)$, is the  desingularization of $A/\iota$, obtained by blowing up these sixteen points. This gives rise to sixteen $(-2)$-curves.
The Picard number $\rho(Km(A))$  is equal to $\rho(A)+16$, so $17\leq \rho(Km(A))\leq 20$, and $\rho(Km(A))=17$ is the general case. 

All Kummer  are $K3$ surfaces, and  conversely Nikulin has characterized the Kummer as the $K3$ surfaces  containing sixteen disjoint rational curves \cite{N}. Among the projective  $K3$ surfaces, there are the smooth complete intersections of three quadrics in $\PP^5$, i.e. the surfaces that are base locus of a general net of quadrics $\NN$. In \cite{E67} Edge considers the smooth complete intersections of quadrics in $\PP^5$ that are Kummer. The property of being Kummer results to be equivalent to the conditions that the quadrics of the net $\NN$ have a common self-polar symplex $\Sigma$ and 
of containing a line in general position with respect to $\Sigma$.

In terms of equations, choosing the common self-polar symplex as reference symplex, one of the quadrics of $\NN$ can be expressed in canonical form by $x_0^2+x_1^2+\dots+x_5^2=0$; then the equations of the two other quadrics can be  put in the form $a_0x_0^2+a_1x_1^2+\dots+a_5x_5^2=0$, $a_0^2x_0^2+a_1^2x_1^2+\dots+a_5^2x_5^2=0$, where $a_0, a_1, \ldots,a_5$ are distinct.

 It is known (see for instance \cite{GS}) that the Kummer surface $X$ obtained in this way is $Km(A)$, where $A=\mathcal J(C)$, the jacobian of the plane quintic $C$ of affine equation $y^2=\prod_{i=0}^5 (x-a_i)$. In other words $C$ is a double covering of $\PP^1$ ramified at six points. $X$ contains $32$ lines and the N\' eron--Severi group $NS(X)$ is generated by $17$ of these lines, explicitly described in \cite{GS}. Projecting $X$ in $\PP^3$ from one of the lines contained in it, one obtains the Weddle surface, with six nodes,  birational to the classical quartic Kummer surface. In Hudson's book, the Weddle surface is constructed as image of a Kummer quartic $K$ by the rational map associated to the linear system  of sextic curves passing through ten nodes (\cite{H}, pag 166--167). A table states a dictionary between nodes and particular curves on the two surfaces.

We mention that $X$ has also another nice interpretation: it is the singular surface of a quadric congruence of lines in $\PP^3$, as explained for instance in \cite{GH78} and \cite{Do}.

Edge describes various geometric aspects of the surface $X$. Among many other facts, he proves that a complete intersection of three quadrics, having a common self-polar symplex $\Sigma$, contains one line in general position with respect to $\Sigma$ if and only if it contains $16$ lines. In this case, fixed a general point $y$ on $X$, there is precisely one quadric $Q_y$ in the net $\NN$ containing the tangent plane $\TT_yX$ to $X$ at $y$. The other quadrics of $\NN$ are shown to cut on $\TT_yX$ pairs of lines of an involution, that can be explicitly described. The two fixed lines of this involution, called asymptotic by Edge, are obtained from quadrics meeting $\TT_yX$ at a line with multiplicity two; they coincide if and only if $y$ belongs to one of the $32$ lines of $X$, which in this case  results to be the only fixed line of the involution. Edge proves (\cite{E67}, Sections 11-12) that in the birational map mentioned above, the asymptotic lines on $X$ correspond to the asymptotic lines, in the usual sense, of the Kummer surface $K$.

In \cite{D82} Dye extends the description given by Edge in the following way. He considers the tangent space to the quadric $Q_y$ at the point $y$, and proves that it coincides with $\TT^{(2)}_yX$, the (embedded) osculating space of $X$ at $y$. In the subsequent paper \cite{D92} he shows moreover that if a complete intersection of three quadrics in $\PP^5$, with a common self-polar symplex $\Sigma$, has one osculating space, at a point not on a face of $\Sigma$, of dimension less than $5$, which is the expected dimension, then it is a Kummer surface, so all osculating spaces have dimension less than  $5$. The dimension of the osculating spaces at points of $X$ lying on faces of $\Sigma$ further decreases, as it is also computed by Dye.  

Before continuing, let us briefly recall the following interpretation of the osculating spaces of a quasi-projective variety $X\subset \PP^N$ of dimension $n$.  
Let $x\in X$ be a smooth point. We can choose a system of affine
coordinates around $x$ and a local parametrization of $X$ of the
form $\phi(t_1,...,t_n)$ where $x=\phi(0,...,0)$ and the $N$
components of $\phi$ are formal power series.

One defines the $s$th osculating (vector) space
$T_x^{(s)}X$ to be the span of all
partial derivatives of $\phi$ of order
$\leq s$ at $(0,...,0)$. 
The expected dimension of $T_x^{(s)}X$
is ${{n+s}\choose {s}}-1$, but
in general $\dim T_x^{(s)}X\leq {{n+s}\choose {s}}-1$;
if strict inequality
holds for all smooth points of $X$, and $\dim
T_x^{(s)}X={{n+s}\choose {s}}-1-\delta$ for general $x$,
then $X$ is said to satisfy $\delta$ Laplace equations of order $s$.
Indeed, in this case the partials of order $s$ of $\phi$
are linearly dependent, which gives $\delta$ differential equations
of order $s$ which are satisfied by the components of $\phi$.
Of course for $s=1$, since $x$ is smooth, we get the tangent space to $X$ at $x$. If $s=2$, the second osculating space is often simply called the osculating space.
One can naturally consider also the embedded $s$th osculating space $\TT_x^{(s)}X$. If $X$ is a rational variety, parametrized by $N+1$ forms $F_0, \ldots,F_N$ of degree $d$ in $n+1$ variables, then from Euler's formula it follows that $\TT_x^{(s)}X$ is generated by the $s$th partial derivatives of the point $(F_0, \ldots,F_N)$ at $x$.
Varieties satisfying Laplace equations were classically studied in the realm of projective-differential geometry, where a foundational article is due to Corrado Segre \cite{S}, who adopted in greater generality the approach of Darboux (\cite{Da}). In more recent times, this point of view has been developed by Griffiths and Harris in \cite {GH79}. See also \cite{Sh} in the case of surfaces.

Therefore, the Kummer surface in $\PP^5$ satisfies one Laplace equation of order $2$. This result, which does not seem  to be very well known, is interesting because recently  there has been a renewed interest in  surfaces of $\PP^N$, $N\geq 5$, satisfying at least one Laplace equation of any order, and because there are few examples  known so far. 

The larger class of examples are the  surfaces uniruled by lines, i.e. such that through any general point passes a line contained  in the surface. In this case it is possible to find a parametrization such that one of the parameters appears at most at degree one, hence at least one second partial derivative vanishes  identically.  Apart from these and the Kummer surface, all known examples are rational surfaces. If the parametrization is given by cubic polynomials, the complete classification has been given by Eugenio Togliatti (\cite{T1}, \cite{T2}). For toric surfaces, partial results have been given by Perkinson \cite{P}. In \cite{MMO} a new point of view has been introduced, linking via apolarity rational varieties satisfying at least one Laplace equation with artinian ideals of  polynomial rings failing the Weak Lefschetz Property. This has led to the extension of the classification of Togliatti to rational varieties of any dimension parametrized by cubics (\cite{MMO}, \cite{MRM}), and to several new examples in larger degree (\cite{DIV}, \cite{MM1}, \cite{MM2} ). Surfaces satisfying a Laplace equation of order two have also been considered  in \cite{CM1}, \cite{CM2} by Chiantini and Markwig, who call them triple-point defective surfaces. They show that these surfaces, if they are linearly normal and of degree large enough, are necessarily ruled.

Another reason why the example of the Kummer surface seems interesting to us is because of the comments of Dye in both the quoted articles. In \cite{D82} Dye writes that Corrado Segre had claimed that, if the osculating space at a general point of a smooth surface of $\PP^N$, $N\geq 5$, is a $\PP^4$, then the surface contains two infinite families of lines. Dye also quotes a similar assertion by Room (\cite{S1}, \cite{R}). In \cite{D92} again Dye writes that Kummer's example contrasts the assertions of Segre and Room.

Aim of this article is to describe the interesting osculatory behaviour of Kummer surface,  and to explain the apparent contrast between this example  and claims of Segre. To this end, in  \S2 we will reproduce the results of Edge and Dye to determine the osculating space at a general point of the surface. Then in \S 3 we will explain how the theory of osculating spaces of a projective surface $X$ is related to its second fundamental form, and to the existence of conjugate directions at the points of $X$. From this it will result an interpretation of  the claims of Corrado Segre different from the one given by Dye, which is not in contrast with Kummer example. It will also offer a new simple way of computing the dimensions of its osculating spaces, only based on Edge's article.
\medskip

{\bf Notation.} We will always work over the complex field.
\medskip

{\bf Aknowledgements.} I had reflected on the osculating behavior of surfaces since many years, and some of the ideas exposed here were discussed in the graduation theses of my former students Orsola Tommasi and Marco Brazzach.  I'm grateful to Ivan Cheltsov who gave me the opportunity to write this article.

 
\section{Geometric properties of Kummer surface according to  Edge and Dye}


In this section we collect some basic facts about the geometry of the Kummer surfaces, from \cite{E67}, \cite{D82}, and \cite{D92}.

Let $X\subset \PP^5$ be a smooth surface complete intersection of three quadrics. Let $\NN$ be the corresponding net of quadrics. Assume that these quadrics have a common self-polar symplex $\Sigma$ and that $\Sigma$ is chosen as reference symplex. This means that $\NN$ is generated by three independent quadrics whose equations have the form
\begin{equation}\label{1}
\sum_{i=0}^5 \alpha_ix_i^2=0, \ \ \sum_{i=0}^5 \beta_ix_i^2=0, \ \ \sum_{i=0}^5 \gamma_ix_i^2=0. 
\end{equation}

In \cite{E67}, Section 3,  it is proved:
\begin{thm}
Let $X$ be a smooth surface  complete intersection of the quadrics in (\ref{1}). $X$ contains a line $l$ not meeting any plane face of $\Sigma$ if and only if there exist  $a_0, ..., a_5\in \C$, with $a_i\neq a_j$ if $i\neq j$, such that the net is generated by the quadrics 
 \begin{equation}\label{2}
Q_0: \sum_{i=0}^5 x_i^2=0, \ \ Q_1: \sum_{i=0}^5 a_ix_i^2=0, \ \ Q_2: \sum_{i=0}^5 a_i^2x_i^2=0. 
\end{equation}
In this case $X$ contains exactly $32$ lines, transformed of $l$ under the action of the elementary group of order $32$ generated by the projective transformations $h_i$, $i=0,...,5$, where $h_i$ replaces $x_i$ with $-x_i$, while leaving the other five coordinates unaltered.
\end{thm}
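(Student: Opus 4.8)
My approach is to translate ``$m$ is a line on $X$'' into linear algebra over $\C^6$. Write the net \eqref{2}, read as the space of coefficient vectors of diagonal quadrics, as $W=\gen{(1,\dots,1),\,(a_0,\dots,a_5),\,(a_0^2,\dots,a_5^2)}\subset\C^6$, put $r_i=\prod_{j\neq i}(a_i-a_j)^{-1}$, and note that $W^{\perp}=\{(r_if(a_i))_i:\deg f\le 2\}$; the only computational input is the identity $\sum_i a_i^m r_i=0$ for $0\le m\le 4$. For the ``if'' direction I fix square roots $\sqrt{r_i}$ and take $l=\gen{R,S}$, the line joining $R=(\sqrt{r_0},\dots,\sqrt{r_5})$ and $S=(a_0\sqrt{r_0},\dots,a_5\sqrt{r_5})$ (independent because the $a_i$ are distinct). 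The three linear conditions expressing $l\subset Q_k$ are $\sum_i a_i^kr_i=\sum_i a_i^{k+1}r_i=\sum_i a_i^{k+2}r_i=0$, which hold for $k=0,1,2$ since the exponents are $\le 4$; hence $l\subset Q_0\cap Q_1\cap Q_2=X$. The $i$-th coordinate of $\mu R+\nu S$ is $\sqrt{r_i}\,(\mu+\nu a_i)$, which vanishes only at $[\mu:\nu]=[a_i:-1]$, so two distinct coordinate hyperplanes are never met at one point of $l$; thus $l$ meets no plane face.

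For the ``only if'' direction, let $X$ as in \eqref{1} contain a line $l$ missing every plane face. No $x_i$ vanishes on $l$ (otherwise $l$ lies in a coordinate hyperplane, hence meets a plane face), so $\ell_i:=x_i|_l$ is a nonzero linear form with a single zero $P_i\in l\cong\PP^1$; the $P_i$ are pairwise distinct, since $P_i=P_j$ would put $l$ on $\{x_i=x_j=0\}$. Choosing an affine coordinate $t$ on $l$ avoiding all $P_i$, write $\ell_i=c_i(t-\lambda_i)$ with $c_i\neq0$ and $\lambda_0,\dots,\lambda_5$ distinct. Each quadric through $X$ restricts to $0$ on $l$, so the net $N$ is contained in $\{(d_i):\sum_i d_ic_i^2\lambda_i^k=0,\ k=0,1,2\}$, and the latter is $3$-dimensional (any three of the $(1,\lambda_i,\lambda_i^2)$ are linearly independent), so it equals $N$. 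Describing this kernel as a $W^{\perp}$-type space, $(d_i)\in N$ iff $d_ic_i^2=\rho_ih(\lambda_i)$ for some $h$ of degree $\le 2$, with $\rho_i=\prod_{j\neq i}(\lambda_i-\lambda_j)^{-1}$. A rescaling $x_i\mapsto s_ix_i$ keeps $\Sigma$ the reference symplex, and with $s_i=\sqrt{\rho_i}/c_i$ the net becomes $\gen{(1,\dots,1),(\lambda_i),(\lambda_i^2)}$, i.e.\ it is generated by the quadrics \eqref{2} with $a_i=\lambda_i$, which are distinct.

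For the count, $G=\gen{h_0,\dots,h_5}$ preserves each $Q_k$, hence $X$; since $h_I=\prod_{i\in I}h_i$ acts trivially on $\PP^5$ exactly when $I=\emptyset$ or $I=\{0,\dots,5\}$, $G\cong(\Z/2)^5$ has order $32$. Applying the $h_I$ to $l=\gen{R,S}$ of the first part gives the lines $\gen{R_\epsilon,S_\epsilon}$ with $R_\epsilon=(\epsilon_i\sqrt{r_i})$, $S_\epsilon=(\epsilon_ia_i\sqrt{r_i})$, $\epsilon\in\{\pm1\}^6$, and two sign vectors give the same line only when they differ globally (else $\epsilon_i/\epsilon'_i$ would be a nonconstant affine function of $a_i$ taking the values $\pm1$ at six points): so these are $32$ distinct lines. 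They are all of them: if $m=\gen{p,q}\subset X$, then $(p_i^2),(q_i^2),(p_iq_i)\in W^{\perp}$, so $p_i^2=r_ig_1(a_i)$, $q_i^2=r_ig_2(a_i)$, $p_iq_i=r_ig_3(a_i)$ with $\deg g_j\le 2$, and $(p_iq_i)^2=p_i^2q_i^2$ at the six points $a_i$ forces $g_1g_2=g_3^2$ as polynomials. A factorisation analysis shows that, unless $p\propto q$ (so $m$ is not a line), one can rescale and relabel $p,q$ so that $g_1=(x-\beta)^2$, $g_2=(x-\gamma)^2$, $g_3=(x-\beta)(x-\gamma)$ with $\beta\neq\gamma$ (the case $\gamma=\infty$, i.e.\ $g_2$ constant, being allowed); then $p=S_\epsilon-\beta R_\epsilon$ and $q=S_\epsilon-\gamma R_\epsilon$, so $m=\gen{R_\epsilon,S_\epsilon}$.

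The hard part is this final step: handling all solutions of $g_1g_2=g_3^2$ with $\deg g_j\le 2$, in particular the degenerate ones (corresponding to the ``$\gamma=\infty$'' lines, and a priori to lines meeting a face of $\Sigma$), and verifying that none of them lies outside the $G$-orbit while checking that the $32$ translates are genuinely distinct. One should also bear in mind, for the ``only if'' direction, that the canonical form \eqref{2} is obtained only after the coordinate rescaling, not for an arbitrary system of coordinates adapted to $\Sigma$.
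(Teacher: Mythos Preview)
The paper does not prove this theorem itself; it only attributes it to Edge \cite{E67}, Section~3. So there is no ``paper's proof'' to compare against, and I assess your argument on its own.

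Your strategy is correct and quite clean, but there is a genuine gap in the ``only if'' direction. You assert:
\begin{itemize}
\item ``No $x_i$ vanishes on $l$ (otherwise $l$ lies in a coordinate hyperplane, hence meets a plane face)'';
\item ``the $P_i$ are pairwise distinct, since $P_i=P_j$ would put $l$ on $\{x_i=x_j=0\}$''.
\end{itemize}
Neither follows from the hypothesis that $l$ avoids the \emph{plane} (i.e.\ $2$--dimensional) faces of $\Sigma$. A line contained in the hyperplane $H_i=\{x_i=0\}\cong\PP^4$ need not meet any $2$--plane of $\PP^4$; and $\{x_i=x_j=0\}$ is a $3$--face of $\Sigma$, not a plane face, so $l$ meeting it is not excluded. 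Thus as written the deduction that the $\lambda_i$ are pairwise distinct (hence that the net is exactly the Vandermonde-type net and can be rescaled to \eqref{2}) is unjustified.

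Both claims \emph{are} true, but you must invoke the smoothness of $X$, which you never use. If $l\subset H_0$, your own orthogonality argument shows that the projection of the net $N$ to the last five coordinates lands in a $2$--dimensional space, so $N$ contains the vector $(1,0,\dots,0)$; then $x_0^2$ lies in the net and $X$ is non-reduced. If instead no $x_i$ vanishes on $l$ but $\lambda_0=\lambda_1$, one checks directly that $(c_1^2,-c_0^2,0,\dots,0)$ satisfies the three orthogonality conditions, so the reducible quadric $c_1^2x_0^2-c_0^2x_1^2$ lies in the net and $X$ is reducible. Either way $X$ is not a smooth surface. Once this is inserted, the rest of your argument (the explicit line $\gen{R,S}$, the $32$ distinct $G$--translates, and the factorisation $g_1g_2=g_3^2$ forcing every line into the orbit) goes through; your closing caveats about the degenerate factorisations and the rescaling are well placed and the details there are routine.
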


In the following of this section $X$ will be the surface complete interesection of the quadrics $Q_0, Q_1, Q_2$ of (\ref{2}).  We can consider the system of equations $Q_0=Q_1=Q_2=0$ as a linear homogeneous system in the unknown $x_i^2$.  Since the rank of the system is $3$ by construction, if $y=[y_0,...,y_5]\in X$, then there exist  constants $\xi, \eta, \zeta\in\C $  such that 
\begin{equation}\label{3}
y_i=\xi+\eta a_i + \zeta a_i^2,\end{equation} for $i=0,...,5$. To take advantage of the properties of the symmetric functions in $a_0,...,a_5$, Edge slightly modifies equations (\ref{3}) and writes the following equations
 \begin{equation}\label{4}
y_i f'(a_i)=\xi+\eta a_i + \zeta a_i^2,\end{equation} 
where $f(t)=\prod_{i=0}^5 (t-a_i)$ is the monic polynomial of degree $6$ having $a_0,...,a_5$ as roots.

This allows him to prove in Sections 5--6 of \cite{E67} the following result.
\begin{thm}\label{quadrics}
If $y\in X$ is given by (\ref{4}), then the tangent plane $\TT_yX$ has the following parametric equations in the parameters $p,q,r$: 
\begin{equation}\label{5}
x_i=(p+qa_i+ra_i^2)y_i/(\xi+\eta a_i+\zeta a_i^2).
\end{equation}
$\TT_yX$ is contained in the quadric $\xi Q_0+\eta Q_1+\zeta Q_2=0$  of the net $\NN$. The other quadrics of $\NN$ intersect $\TT_yX$ in a pair of lines passing through $y$. Moreover, using $p,q,r$ as homogenous coordinates in $\TT_yX$, the pairs of lines arising in this way are conjugate with respect to the conic $\Gamma(y)$ of equation $q^2-4rp=0$. 
\end{thm}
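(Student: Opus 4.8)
The plan is to work with the explicit parametrization already furnished by Edge and simply compute. First I would use the relation \eqref{4}: the point $y \in X$ has coordinates $y_i = (\xi + \eta a_i + \zeta a_i^2)/f'(a_i)$ up to scaling, so a general point of the ambient $\PP^5$ near $y$ on the surface is obtained by perturbing $(\xi,\eta,\zeta)$. Indeed, differentiating $y_i f'(a_i) = \xi + \eta a_i + \zeta a_i^2$ with respect to the three moduli $\xi,\eta,\zeta$, the tangent plane $\TT_yX$ is spanned (affinely through $y$) by the three vectors $v_0 = (1/f'(a_i))_i$, $v_1 = (a_i/f'(a_i))_i$, $v_2 = (a_i^2/f'(a_i))_i$; combining with $y$ itself one recovers exactly the parametric form \eqref{5} with homogeneous parameters $p,q,r$ on $\TT_yX \cong \PP^2$, where the point $(p,q,r)$ corresponds to $x_i = (p + qa_i + ra_i^2) y_i/(\xi + \eta a_i + \zeta a_i^2)$. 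The first two sentences of the theorem are thus essentially a restatement of the parametrization, so I would only sketch this and then focus on the last two claims.

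**Intersecting a general quadric of the net with $\TT_yX$.**

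A general quadric of $\NN$ has the form $Q_\lambda = \sum_i \ell(a_i) x_i^2 = 0$, where $\ell(t) = \lambda_0 + \lambda_1 t + \lambda_2 t^2$ ranges over all quadratic polynomials (with $\lambda = (\xi,\eta,\zeta)$ giving the distinguished quadric $Q_y$ containing $\TT_yX$). Substituting the parametrization \eqref{5} into $Q_\lambda$, the restriction of $Q_\lambda$ to $\TT_yX$ becomes, up to a nonzero scalar, the conic in $(p,q,r)$
\[
C_\lambda(p,q,r) \;=\; \sum_{i=0}^5 \frac{\ell(a_i)\,(p + qa_i + ra_i^2)^2}{f'(a_i)\,(\xi + \eta a_i + \zeta a_i^2)} \;=\; 0.
\]
The key computational engine is the classical residue/partial-fractions identity: for a polynomial $g$ of degree $\le 4$, $\sum_i g(a_i)/f'(a_i)$ equals the coefficient of $t^5$ in $g(t)$ times nothing, i.e. it vanishes for $\deg g \le 4$ and equals the leading coefficient when $\deg g = 5$ — this is what Edge exploits via \eqref{4}. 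Since $\ell(a_i)(p+qa_i+ra_i^2)^2$ is a polynomial in $a_i$ of degree $\le 6$ while the denominator contributes $\xi + \eta a_i + \zeta a_i^2$, I would expand $C_\lambda$ and use the partial-fraction expansion
\[
\frac{\ell(t)\,(p+qt+rt^2)^2}{\xi + \eta t + \zeta t^2}
\;=\; \big(\text{polynomial part of degree}\le 4\big) \;+\; \frac{At+B}{\xi+\eta t+\zeta t^2},
\]
so that after summing against $1/f'(a_i)$ only the residual term $\sum_i (Aa_i + B)/(f'(a_i)(\xi+\eta a_i+\zeta a_i^2))$ survives, assuming the polynomial part has degree $\le 4$. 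A quick degree count shows the polynomial part has degree exactly $4$, so one more application of the residue identity kills it and $C_\lambda$ reduces to a linear combination of the two basic sums $S_1 := \sum_i a_i/(f'(a_i)(\xi+\eta a_i+\zeta a_i^2))$ and $S_0 := \sum_i 1/(f'(a_i)(\xi+\eta a_i+\zeta a_i^2))$. Since these are independent of $p,q,r$, the resulting conic $C_\lambda$ will turn out to be a product of two linear forms in $p,q,r$ (i.e. a pair of lines), and tracking the coefficients $A, B$ — which are quadratic in $(p,q,r)$ and linear in $\lambda$ — gives the explicit pair of lines and shows it passes through $y$, i.e. through $(p,q,r) = (\xi,\eta,\zeta)$.

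**Conjugacy with respect to $q^2 - 4rp = 0$.**

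For the last assertion I would identify, for each $\lambda$, the pair of lines $C_\lambda = L_\lambda L'_\lambda$ with the pair of roots of a quadratic equation in the pencil of conics through four points of $\TT_yX$; concretely, writing $C_\lambda$ in terms of the Veronese-type variables, the splitting is governed by a quadratic whose discriminant is $q^2 - 4rp$ evaluated at suitable arguments. Two pairs of lines $\{L_\lambda, L'_\lambda\}$ and $\{L_\mu, L'_\mu\}$ arising from $\lambda \ne \mu$ are conjugate with respect to the conic $\Gamma(y): q^2 - 4rp = 0$ precisely when the cross-ratio condition holds, which I expect to reduce — after the residue computation above — to a bilinear identity in $\lambda$ and $\mu$ that is manifestly symmetric and vanishes identically by the same partial-fraction mechanism. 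The cleanest route: show that every conic $C_\lambda$ in the web of restricted quadrics is of the form $\alpha(\lambda) p + \beta(\lambda) q + \gamma(\lambda) r$ paired against a fixed linear system, so that the map $\lambda \mapsto C_\lambda$ lands in a net of line-pairs all inscribed in / circumscribed about $\Gamma(y)$; the classical fact that such a net is exactly the net of line-pairs conjugate to a fixed conic then finishes it.

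**Main obstacle.**

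The genuine content — and the step I expect to be most delicate — is the bookkeeping in the partial-fraction reduction: one must verify that the polynomial part really has degree $\le 4$ (so that only the two residual sums $S_0, S_1$ survive) and then correctly extract the coefficients $A = A(p,q,r;\lambda)$, $B = B(p,q,r;\lambda)$ as bilinear expressions, because a miscount by one in the degree would leave an extra surviving term and destroy the factorization into a line-pair. Once that reduction is in hand, both the ``pair of lines through $y$'' statement and the conjugacy with respect to $q^2 - 4rp = 0$ follow from elementary conic geometry together with the symmetry of the resulting bilinear form in $\lambda, \mu$; these final steps are routine by comparison. I would carry out the reduction first, record the two survivor sums $S_0, S_1$, and only then address the two geometric conclusions in that order.
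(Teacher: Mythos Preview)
Your overall framework is the right one and matches what the paper attributes to Edge: everything runs on the Lagrange/residue identity $\sum_i g(a_i)/f'(a_i)=0$ for $\deg g\le 4$, and the key reduction is exactly the partial-fraction step you describe. The tangent-plane parametrization and the containment $\TT_yX\subset\{\xi Q_0+\eta Q_1+\zeta Q_2=0\}$ come out immediately from that identity, as you say. (One slip: in your first paragraph the relation should involve $y_i^2$, not $y_i$; the tangent vectors are $(a_i^k/(y_i f'(a_i)))_i$, not $(a_i^k/f'(a_i))_i$. This is what makes \eqref{5} come out correctly.)

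There is, however, a genuine gap in your plan for the last two claims. After the partial-fraction reduction you obtain $C_\lambda=A(p,q,r)\,S_1+B(p,q,r)\,S_0$ with $A,B$ \emph{quadratic} in $(p,q,r)$ and $S_0,S_1$ constants. This is simply a conic; nothing in the bookkeeping forces it to split as a product of two linear forms, and ``tracking the coefficients'' will not produce a factorization by itself. The missing idea is geometric: since every $Q_\lambda$ in the net contains $X$, the tangent plane $\TT_yX$ lies inside the tangent hyperplane $\TT_yQ_\lambda$ (check: $\sum_i\ell(a_i)y_ix_i=\sum_i\ell(a_i)(p+qa_i+ra_i^2)/f'(a_i)=0$ by the degree-$\le 4$ identity). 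Hence $C_\lambda=Q_\lambda\cap\TT_yX$ is a plane conic \emph{singular at} $y=(\xi,\eta,\zeta)$, and therefore a pair of lines through $y$. You need this step; it is short but it is the reason the factorization exists.

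For the conjugacy with respect to $\Gamma(y):q^2-4rp=0$, your paragraph is a list of hopes rather than an argument. Once you know the $C_\lambda$ form a \emph{pencil} of line-pairs through $y$ (the map $\lambda\mapsto C_\lambda$ kills the direction $(\xi,\eta,\zeta)$), you must identify that pencil. The clean route is to work modulo $m(t)=\xi+\eta t+\zeta t^2$: reduce $n(t)=p+qt+rt^2$ to a linear polynomial $u+vt$ and compute $C_\lambda$ as a quadratic in $(u,v)$; the discriminant of the resulting binary form in $(u,v)$ is what ties the two lines of $C_\lambda$ to the tangents from $y$ to $\Gamma$. Equivalently, identify the two fixed (double) lines of the pencil and check they are the tangents from $y$ to $\Gamma$; an involution on a pencil is determined by its fixed elements, so this pins down the conjugacy. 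Your ``cross-ratio condition bilinear in $\lambda,\mu$'' suggestion does not lead anywhere as stated, because conjugacy here is a statement about the two lines within a \emph{single} $C_\lambda$, not a relation between different $C_\lambda$'s.
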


\begin{rem}\label{involution}  From Theorem \ref{quadrics} it follows that the quadrics of $\NN$ not containing $\TT_yX$ cut on it the pairs of lines of an involution in the pencil of lines of centre $y$. The fixed lines of this involution are the tangent lines to $\Gamma(y)$ through $y$. Each of them is the double intersection with $\TT_yX$  of a pencil of quadrics contained in the net $\NN$.
Moreover, the involution has a fixed line if and only if $y$ belongs to a line contained in $X$.
\end{rem}

The above description has been further extended by Dye.  He first determines for any point of $X$ the quadrics of the net $\NN$ containing the tangent plane $\TT_yX$.

\begin{thm} (\cite{D82}, Theorem 1)
Let $X$ be a smooth surface  complete intersection of the quadrics in (\ref{1}). Let $y\in X$, and let $T$ denote the tangent plane to $X$ at $y$. Let $E_0, ... E_5$ denote the vertices of the symplex $\Sigma$. Then:
\begin{enumerate}
\item if $y$ is not a point of intersection of two lines contained in $X$, then there is a unique quadric of $\NN$ containing $T$;
\item if $y$ is the point of intersection of two lines contained in $X$, then $y$ lies in one face of $\Sigma$. If it is the face opposite to $E_i$, then the quadrics of $\NN$ containing $T$ are the cones of $\NN$ through $E_i$; moreover  $E_i$ belongs to the vertex of any such cone;
\item in the pencil of cones of $\NN$ through $E_i$, there is exactly one cone containing no tangent plane to $X$ other than those at the 16 points of the face of $\Sigma$ opposite to $E_i$, that are intersections of pairs of lines on $X$.
\end{enumerate}
\end{thm}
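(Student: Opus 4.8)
The plan is to control, for a smooth point $y\in X$ with embedded tangent plane $T$, the restriction homomorphism $\rho_y\colon\NN\to\Sym^2 T^{\vee}$, $Q\mapsto Q|_T$: a quadric of $\NN$ contains $T$ exactly when it lies in $\ker\rho_y$, so all three assertions are statements about this kernel, the quadrics containing $T$ forming $\PP(\ker\rho_y)\subseteq\NN$. Two remarks frame everything. Since every $Q\in\NN$ contains $X$ and $y$ is a smooth point of $X$, the conic $Q|_T$ vanishes at $[y]$ together with its differential, hence is singular at $[y]$; thus $\im\rho_y$ lies in the $3$-dimensional space of conics of $T$ singular at $[y]$ (the pairs of lines through $[y]$). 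And $\rho_y\ne 0$, for otherwise $T\subseteq\bigcap_{Q\in\NN}Q=X$, impossible for the surface $X$; so $\rk\rho_y\ge 1$, i.e.\ $\dim\ker\rho_y\le 2$. For the first assertion, suppose $\dim\ker\rho_y=2$, so $\im\rho_y$ is a line $\langle C\rangle$ with $C\ne 0$ a pair of lines $m_1\cup m_2$ through $[y]$; every $Q|_T$ is a multiple of $C$, hence vanishes on $m_1$ and on $m_2$, so $m_1,m_2\subseteq Q$ for all $Q\in\NN$, whence $m_1,m_2\subseteq X$. These lines are distinct: by Theorem~\ref{quadrics} and Remark~\ref{involution}, away from the lines of $X$ the quadrics of $\NN$ cut on $T$ the pairs of a genuine involution, which span a $2$-dimensional system of conics, so $\dim\im\rho_y=2$ there, and the collapse to a $1$-dimensional image occurs only at a point lying on two distinct lines of $X$. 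Contrapositively, if $y$ is not such a point then $\dim\ker\rho_y\le 1$, with equality because $T\subseteq \xi Q_0+\eta Q_1+\zeta Q_2$ by Theorem~\ref{quadrics}; this is the first assertion.

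For the second assertion, one first determines where two lines of $X$ can meet. By Theorem~2.1 the $32$ lines form the orbit of $l$ under $\langle h_0,\dots,h_5\rangle$; a short computation with the reflections shows that $g_1 l$ and $g_2 l$ meet only when $g_2 g_1^{-1}=h_i$ for a single $i$, and then their common point is $g_1$ applied to the unique point of $l\cap\{x_i=0\}$, which lies on the facet of $\Sigma$ opposite $E_i$. So let $y$ lie on two lines $\ell_1,\ell_2$ of $X$, on the facet $\{x_i=0\}$. First $E_i\in T$: for every $Q\in\NN$ one has $Q(y+tE_i)=t^2Q(E_i)$ (because $y_i=0$), so $\overline{E_i y}$ is tangent to $X$ at $y$. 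Next, $\ell_1\cup\ell_2\subseteq X\cap T$ forces every $Q|_T$ to be divisible by $\ell_1$ and by $\ell_2$, hence a multiple of $\ell_1\cdot\ell_2$; so $\im\rho_y=\langle\ell_1\ell_2\rangle$ and $\dim\ker\rho_y=2$. Finally, for $Q_c=\sum c_j x_j^2\in\NN$ one has $Q_c|_T([E_i])=Q_c(E_i)=c_i$, so $Q_c|_T\ne 0$ whenever $c_i\ne 0$; hence $\ker\rho_y\subseteq\{c_i=0\}$, with equality by dimension. But $\{Q_c\in\NN:c_i=0\}$ is exactly the pencil of quadrics of $\NN$ whose diagonal matrix annihilates $e_i$ — the cones of $\NN$ through $E_i$, each singular at $E_i$ — which gives the second assertion.

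For the third assertion, consider Edge's assignment $\phi\colon X\dashrightarrow\NN\cong\PP^2$, $y\mapsto \xi Q_0+\eta Q_1+\zeta Q_2$ of~(\ref{3})--(\ref{4}), single-valued off the finitely many two-line points by the first assertion; it depends only on the squares $y_i^2$, hence is the quotient of $X$ by the sign-change group, dominant with a generic fibre of $32$ points. Let $\cP_i=\{Q_c\in\NN:c_i=0\}\cong\PP^1$ be the pencil of cones of $\NN$ through $E_i$. For $y$ not a two-line point, $T$ lies in a member of $\cP_i$ iff $\phi(y)\in\cP_i$ (first assertion) iff $\xi+\eta a_i+\zeta a_i^2=0$ iff $y_i=0$, i.e.\ iff $y$ lies on the facet opposite $E_i$. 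The $16$ two-line points on that facet form a single orbit of the sign-change group, on which $\phi$ is constant, say with value $Q^*\in\cP_i$, and they make up the whole fibre $\phi^{-1}(Q^*)$. Hence every member of $\cP_i$ contains the $16$ tangent planes at those points (second assertion); for $Q\in\cP_i$ with $Q\ne Q^*$, the fibre $\phi^{-1}(Q)\subset X\cap\{x_i=0\}$ supplies $16$ further points, not two-line points, whose tangent planes also lie in $Q$; whereas for $Q=Q^*$ that fibre is exactly the $16$ two-line points. (The finitely many cones of $\cP_i$ with a second vanishing coefficient carry still more tangent planes.) So $Q^*$ is the unique cone of $\cP_i$ containing no tangent plane of $X$ other than the $16$ given, which is the third assertion.

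I expect the main obstacle to lie not in this skeleton but in two steps that genuinely need the explicit machinery of Edge and Dye: the combinatorics of the $32$ lines — that two of them meet exactly as described, forcing the intersection onto a facet — and, above all, the passage from the bound $\dim\ker\rho_y\le 2$ to the precise value at every point, including the exclusion in the first assertion of the degenerate possibility that $\im\rho_y$ be the square of a single line of $X$. Both of these rest on the rational parametrization~(\ref{5}) of $T$ and on identities among the elementary symmetric functions of $a_0,\dots,a_5$, which is precisely the computational part it seems wise to leave to the original sources.
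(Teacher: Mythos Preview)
The paper does not supply its own proof of this statement: it is quoted verbatim as Theorem~1 of \cite{D82}, and the surrounding text only summarizes Dye's conclusions before moving on to the next cited result. So there is no in-paper argument to compare your sketch against; Dye's original proof in \cite{D82} is synthetic, and the alternative one in \cite{D92} proceeds by an explicit local parametrization, as the paper mentions.

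Your framework via the restriction $\rho_y\colon\NN\to\Sym^2T^{\vee}$ is the natural one and the skeleton is sound. Two comments on the gaps you flag. First, the exclusion of $\im\rho_y=\langle m^2\rangle$ in part~(1) is lighter than you suggest: once $m\subseteq X$, every $Q|_T$ factors as $m\cdot m'_Q$, and the linear assignment $Q\mapsto m'_Q$ has kernel $\ker\rho_y$, so under your hypothesis $m'_Q\equiv m'$ is constant and $m'\subseteq X$; thus either $y$ is a two-line point or $m'=m$, and the latter forces every diagonal quadric of $\NN$ to meet $T$ in the double line $m$, which one rules out from Edge's explicit form~(\ref{5}) of $T$ rather than from any delicate symmetric-function identity. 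Second, in part~(3) you use without comment that $Q^*\notin\cP_j$ for $j\ne i$, i.e.\ that no two-line point on facet~$i$ has a second vanishing coordinate; otherwise $Q^*$ would also contain, by part~(2), the sixteen tangent planes coming from facet~$j$, and the uniqueness claim would fail. This is exactly the standing hypothesis that $l$ meet no plane face of $\Sigma$, so it is available, but it should be invoked explicitly. With that in hand, your identification of fibres of $\phi$ with orbits of the sign-change group (hence of cardinality $16$ over every point of $\cP_i$, not $32$) makes the count go through.
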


So for any point $y$ in $X$ there is either a unique quadric, or a distinguished quadric of $\NN$ containing $\TT_yX$. This is called by Dye the osculating quadric of $X$ at $y$. Then he proves that the tangent space at $y$ to  the osculating quadric meets $X$ along a curve having a triple point at $y$. This allows him to compute the dimension of the osculating space $\TT^{(2)}_yX$ for any $y\in X$. 

In the following theorem we collect the results of Theorems 3 and 4 of  \cite{D82}. 

\begin{thm}
Let $X$ be a smooth surface  complete intersection of the quadrics in (\ref{1}). Let $T$ denote the tangent plane to $X$ at $y\in X$. Then 
\begin{enumerate}
\item a hyperplane $H$ cuts $X$ in a curve having $y$ as point of multiplicity at least $3$ if and only if H is the tangent hyperplane at $y$ to a quadric of the net $\NN$ containing $T$;
\item if $y$  is not a point of intersection of two lines contained in $X$, $\dim \TT_y^{(2)}X=4$;
\item if $y$  is the point of intersection of two lines contained in $X$, then $\TT_y^{(2)}X$ is the intersection of the tangent hyperplanes at $y$ to the quadric cones of $\NN$ containing $T$, and $\dim \TT_y^{(2)}X=3$.
\end{enumerate}
\end{thm}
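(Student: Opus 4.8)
The plan is to reduce everything to a count of \emph{osculating hyperplanes} of $X$ at $y$, by which I mean hyperplanes $H$ through $y$ with $\mathrm{mult}_y(H\cap X)\geq 3$. Writing a local parametrization $\phi(t_1,t_2)$ of $X$ with $\phi(0)=y$, a hyperplane $H=\{\ell=0\}$ through $y$ is osculating iff $\mathrm{ord}_0\,\ell(\phi(t))\geq 3$, and expanding $\ell\circ\phi$ to second order shows this holds iff the linear form $\ell$ annihilates the tangent plane $T=\TT_yX$ together with the three second partials $\phi_{t_1 t_1}(0),\phi_{t_1 t_2}(0),\phi_{t_2 t_2}(0)$ of $\phi$ at the origin --- that is, iff $H\supseteq\TT_y^{(2)}X$. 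Hence $\TT_y^{(2)}X$ is exactly the intersection of the osculating hyperplanes of $X$ at $y$, and statements (2) and (3) will follow from (1) once these hyperplanes have been counted, using Dye's description of the quadrics of $\NN$ containing $T$.

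To prove (1) I would work in an affine chart with $y$ at the origin and, for a quadric $Q=\lambda_0Q_0+\lambda_1Q_1+\lambda_2Q_2\in\NN$, write its dehomogenized equation as $q_Q=L_Q+R_Q$, with $L_Q$ linear and $R_Q$ a quadratic form (there is no constant term since $y\in Q$), so that $\TT_yQ=\{L_Q=0\}$ whenever $L_Q\neq 0$. Since $\phi(t)\in X\subseteq Q$, the series $q_Q(\phi(t))$ vanishes identically; its first derivative at the origin gives $L_Q|_{T}=0$ (as it must, since $T\subseteq\TT_yQ$), and its second derivatives at the origin give the identity
\[
L_Q\big(\phi_{t_a t_b}(0)\big)=-2\,B_{R_Q}\big(\phi_{t_a}(0),\phi_{t_b}(0)\big),\qquad a,b\in\{1,2\},
\]
where $B_{R_Q}$ is the symmetric bilinear form of $R_Q$; this identity is nothing but the second fundamental form of $X$ read off from the net, and it is the computational heart of the argument. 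It shows that $\TT_yQ$ is an osculating hyperplane exactly when $B_{R_Q}$ vanishes on $T\times T$, i.e. when $R_Q$ --- hence $q_Q$ --- vanishes on $T$, i.e. when $Q\supseteq T$. This gives the ``if'' direction of (1). For the ``only if'' direction, any osculating hyperplane contains $T$, so its linear equation lies in the three-dimensional space of linear forms vanishing on $T$; if one knows that $Q\mapsto L_Q$ carries $\NN$ isomorphically onto this space, then $H=\TT_yQ$ for a unique $Q\in\NN$, and $H$ osculating forces $Q\supseteq T$.

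The isomorphism just invoked is the step I expect to require the most care, and it is where the special shape of the equations (\ref{2}) must enter. It amounts to showing that no nonzero quadric of $\NN$ is singular at $y$: in the coordinates of (\ref{2}), the quadric $\sum_i\alpha_ix_i^2$ is singular at $y$ iff $\alpha_iy_i=0$ for all $i$, i.e. iff $(\alpha_i)$ is supported on $\{i:y_i=0\}$; but a point of $X$ has at most two vanishing coordinates --- one checks from (\ref{2}) that the three quadrics restricted to a $\PP^2$-face of $\Sigma$ form a nonsingular Vandermonde system in the squared coordinates, so $X$ meets no such face --- while $\NN$, realized inside $\langle x_0^2,\dots,x_5^2\rangle$ as the space of vectors $(g(a_0),\dots,g(a_5))$ with $\deg g\leq 2$, meets every coordinate subspace of codimension at least $3$ only in $0$. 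So the set of quadrics of $\NN$ singular at $y$ reduces to $\{0\}$, hence $Q\mapsto L_Q$ is injective, and being a map between three-dimensional spaces it is the desired isomorphism; in particular every osculating hyperplane is $\TT_yQ$ for a unique $Q\in\NN$, automatically smooth at $y$.

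It remains to read off the dimensions. We have shown $\TT_y^{(2)}X=\bigcap\{\TT_yQ:0\neq Q\in\NN,\ Q\supseteq T\}$, and since $Q\mapsto L_Q$ is injective this intersection has codimension $\dim\{Q\in\NN:Q\supseteq T\}$. By Dye's theorem on the quadrics of $\NN$ containing $T$ (\cite{D82}): if $y$ is not a point of intersection of two lines of $X$, there is a single such quadric up to scalars, the codimension is $1$, and $\dim\TT_y^{(2)}X=4$, which is (2); and if $y$ is the intersection point of two lines of $X$ --- which then lies on the face of $\Sigma$ opposite some vertex $E_i$ --- the quadrics of $\NN$ containing $T$ form the pencil of cones of $\NN$ through $E_i$, a two-dimensional space, so $\TT_y^{(2)}X$ is the intersection of the tangent hyperplanes at $y$ to these cones and has dimension $3$, which is (3). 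In short, the displayed identity turns Dye's count of quadrics directly into the dimension of the osculating space, the only input beyond routine bookkeeping being the non-singularity of the net at every point of $X$.
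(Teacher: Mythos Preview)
Your argument is correct and complete. The paper itself does not prove this theorem: it merely records Dye's results, remarking that the original proof in \cite{D82} is ``geometric of synthetic type'' and that \cite{D92} later gave ``another, purely analytic, proof using an explicit local parametrization of $X$.'' Your approach is the analytic one, but organized around the second fundamental form rather than around an explicit Edge--Dye parametrization: the key displayed identity $L_Q(\phi_{t_at_b})=-2B_{R_Q}(\phi_{t_a},\phi_{t_b})$ is precisely the statement that the restriction of the net to $T$ computes $|II_y|$, which is exactly the viewpoint the paper develops in \S3. Compared with Dye's explicit-coordinate calculation, your version isolates the one nontrivial input --- that no nonzero quadric of $\NN$ is singular at $y$, which you correctly deduce from the Vandermonde structure of (\ref{2}) and the fact that $X$ avoids the $2$-faces of $\Sigma$ --- and then reads off (2) and (3) directly from the previous theorem via the isomorphism $Q\mapsto L_Q$. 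This is arguably cleaner than either of Dye's proofs, and it makes transparent why the dimension drop in $\TT_y^{(2)}X$ matches the count of quadrics of $\NN$ containing $T$; the price is that it takes Dye's Theorem~1 as a black box, whereas the synthetic argument in \cite{D82} develops both results together.
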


The arguments used in the proof are geometric of syntetic type. In the successive article \cite{D92}, Dye gave another, purely analytic, proof using an explicit local parametrization of $X$. This second method applies also to compute the osculating spaces of all orders in the more general case of a complete intersection of $n-r$ quadrics in $\PP^n$, with a common self-polar symplex and equations generalizing (\ref{2}), for $r\geq 1$.
Finally we remark that the equations of the osculating quadric and of the osculating space at a general point $y\in X$ have respectively the following simple forms:
\begin{equation*} \sum_{i=0}^5 f'(a_i)y_i^2x_i^2=0, \ \ \  \sum_{i=0}^5 f'(a_i)y_i^3x_i=0. \end{equation*}


\section{C. Segre point of view and Dye's remarks}

We begin this section recalling the relationship among the osculating spaces, the Gauss map and the projective second fundamental form of a projective  surface. We will follow the approach in  \cite{GH79}), as presented in \cite{MT} (see also \cite{H}).

Let $X\subset \PP^N$ be a projective surface. The Gauss map of $X$ is the rational map from $X$ to the Grassmannian of planes $\gamma: X \dashrightarrow \Gr(2,N)$ such that, for a smooth point $x\in X$,  $\gamma(x)$ is the embedded tangent space $\TT_xX$. Its differential at a smooth point $x$
$$\gamma_{*x}: T_xX \rightarrow T_{\TT_xX}\Gr(2,N)=\hbox{Hom}(\hat\TT _xX,V/\hat\TT _xX)$$
 associates to the tangent vector at $x$ to a smooth curve $C\subset X$ the tangent vector at the point $\TT_xX$ to the curve $\gamma(C)$. Here we have denoted by $V$ the $\C$-vector space such that $\PP(V)=\PP^N$, and $\hat\TT_xX$ the cone over $\TT_xX$ in $V$. Note that $\hat x$ is contained in the kernel of $\gamma_{*x}(v))$ for any $v\in T_xX$, therefore $\gamma_{*x}$ induces a linear map 
$$\gamma'_{*x}: T_xX \rightarrow \hbox{Hom}(\hat\TT _xX/\hat x,V/\hat\TT _xX)=\hbox{Hom}(T_xX, V/\hat\TT_xX).$$  One checks that $(\gamma'_{*x}(v))(w)=(\gamma'_{*x}(w))(v)$, therefore $\gamma'_{*x}$ allows to define a bilinear map from $T_xX$ to the normal space to $X$ at $x$, $N_xX=V/\hat\TT_xX$, or equivalently a quadratic map  from the symmetric product $\hbox{Sym}^{(2)}T
_xX$ to  $N_xX$.  They are both called  projective second fundamental form of $X$ at $x$. 

The second fundamental form can also be interpreted as a linear system of quadrics $\mid II_x\mid$ in $\PP(T_xX)$. Indeed  the dual of $II_x: \hbox{Sym}^{(2)}T_xX\rightarrow N_xX$ is a map from $(N_xX)\check{}$, i.e. the space of tangent hyperplanes to $X$ at $x$, to  the dual of $\hbox{Sym}^{(2)}T_xX$, i.e. the space of  quadratic forms on the tangent space. It associates to a tangent hyperplane $H$ the projectivized tangent cone to the intersection $H\cap X$. The linear system of quadrics $\mid II_x\mid$  is the image of this map in $\PP(T_xX)$. A tangent hyperplane  is in the kernel if and only if it contains the osculating space to $X$ at $x$. Therefore there is the following important relation:
$$\dim \mid II_x\mid=\dim \TT^{(2)}_xX-\dim X-1= \dim \TT^{(2)}_xX-3.$$

We then recall the notion of {\it conjugate directions} for the second fundamental form. Let $v,w\in T_xX$ be tangent vectors. They represent conjugate directions at $x$ if $II_x(v,w)=0$, i.e. the points $[v],[w]\in \PP(T_xX)$ are conjugate with respect to all quadrics in $\mid II_x\mid$. If there is a self-conjugate vector, its direction is called  asymptotic  at $x$. 

Let us now examine all possible cases for a surface, keeping in mind that $\PP(T_xX)$ is a $\PP^1$. 

\begin{prop}\label{secondff} Let $X\subset \PP^N$ be a surface. Assume that $x$ is a general point of $X$. Then one of the following happens: 
\begin{enumerate}
\item $\mid II_x\mid=\emptyset$, if and only if $\TT_x^{(2)}X=\TT_xX$, if and only if $X$ is a plane;

\item $\dim \mid II_x\mid=0$, i.e. $\mid II_x\mid$ is a unique quadric, if and only if $\dim T_x^{(2)}=3$. In this case for any tangent direction there is a conjugate direction. If this happens, either $N=3$ or $X$ is a ruled developable surface;

\item $\dim \mid II_x\mid=1$, i.e. $\mid II_x\mid$ is a pencil of quadrics, if and only if $\dim T_x^{(2)}=4$.  These surfaces, when $N\geq 5$, were classically called $\Phi$ surfaces. In this case at a general point of $X$ there is exactly either one pair of conjugate directions, or one asymptotic direction.  The conjugate directions result to be the fixed points of the involution in $\PP(T_xX)$, whose pairs are the tangents to the singular hyperplane sections, i.e. the tangents to the cuspidal tangent hyperplane sections.

\item $\dim \mid II_x\mid=2$. This is the general case when $N\geq 5$. There are no conjugate directions.
\end{enumerate}
\end{prop}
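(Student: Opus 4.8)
The plan is to derive everything from the identity $\dim|II_x|=\dim\TT^{(2)}_xX-3$ obtained just above, together with one structural observation: since $\PP(T_xX)\cong\PP^1$, the linear system $|II_x|$ is a linear subspace of the $\PP^2$ parametrising \emph{all} binary quadrics on $T_xX$. Hence $\dim|II_x|\in\{-1,0,1,2\}$, and the identity converts this into $\dim\TT^{(2)}_xX\in\{2,3,4,5\}$; this already yields the four mutually exclusive alternatives and the equivalences with $\dim\TT^{(2)}_xX$ (in case (4), $5$ is the expected dimension and $|II_x|$ is the full system of binary quadrics, which is why (4) is the generic situation for $N\ge5$). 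For (1): $|II_x|=\emptyset$ if and only if $\TT^{(2)}_xX=\TT_xX$; if this holds at the general point then the Gauss map has identically vanishing differential, hence constant image over $\C$, so $X$ lies in a fixed plane, and the converse is clear.

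The conjugate-direction statements in (2)--(4) are then pure linear algebra of binary forms. For nonzero $v,w\in T_xX$, the relation $II_x(v,w)=0$ says that every member of $|II_x|$, viewed in $\PP(\Sym^2T_x^\vee X)$, lies on the line $(v\cdot w)^\perp$ determined by the element $v\cdot w\in\Sym^2T_xX$; conversely, since every element of $\Sym^2$ of a $2$-dimensional space factors, each line of $\PP(\Sym^2T_x^\vee X)$ has this shape for a unique unordered pair $\{[v],[w]\}$. So in case (4), with $|II_x|$ equal to all of $\PP^2$, no line contains it and there are no conjugate directions (a fortiori no asymptotic direction); in case (2), with $|II_x|$ a single point $Q_0$, for any $[v]$ the single linear equation $\langle Q_0,v\cdot w\rangle=0$ in $[w]$ is solvable, so every tangent direction has a conjugate direction.

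For case (3), $|II_x|$ is a pencil, i.e. a line $L\subset\PP(\Sym^2T_x^\vee X)$; its annihilator is a single point of $\PP(\Sym^2T_xX)$, and factoring it gives exactly one unordered conjugate pair. To identify that pair I would use that $L$ meets the discriminant conic (the locus of rank-$1$ binary quadrics) in two points — the two cuspidal tangent-hyperplane sections — and that the annihilator of $\Span\{\ell_1^2,\ell_2^2\}$ equals $[v_1\cdot v_2]$, where $\ell_i(v_i)=0$; hence the conjugate pair consists precisely of the supports $[v_1],[v_2]$ of those rank-$1$ members, that is of the tangents to the two cuspidal sections, equivalently of the fixed points of the involution of $\PP(T_xX)$ exchanging the two tangent lines of a generic (nodal) tangent-hyperplane section. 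These two points coincide — producing a single self-conjugate, that is asymptotic, direction — exactly when $L$ is tangent to the discriminant conic, equivalently when the pencil $|II_x|$ has a base point; this is the ``one pair of conjugate directions, or one asymptotic direction'' of the statement. (For $N\ge5$ these are the classical $\Phi$-surfaces of \cite{R}, \cite{S1}.)

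The only genuinely non-formal point is the last clause of (2): that $\dim|II_x|=0$ forces $N=3$ or $X$ ruled developable. I would split on $\rk Q_0$ at the general point. If $\rk Q_0=1$, the radical of $Q_0$ equals $\ker\gamma_{*x}$ and is a line subbundle of $TX$, so $X$ has positive-dimensional Gauss fibres; by the classical structure theorem for surfaces with degenerate Gauss map these fibres are lines, and $X$ is a cone or the tangent developable of a curve, in either case a ruled developable surface. If $\rk Q_0=2$ at the general point, then $II_x$ is non-degenerate and one is in the classical situation analysed by C.\ Segre and recalled in \cite{GH79}: the osculating $3$-spaces $\TT^{(2)}_xX$ are constant along $X$, so $X\subset\PP^3$. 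I expect this rank-$2$ sub-case to be the main obstacle, and rather than reproving it I would quote the classical classification; the remaining items (the $\Phi$-surface terminology in (3) and ``general case for $N\ge5$'' in (4)) are then immediate remarks.
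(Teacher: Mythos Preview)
The paper does not actually prove this proposition: it is stated as a recapitulation of classical facts (in the spirit of \cite{GH79} and \cite{S}) immediately after the identity $\dim|II_x|=\dim\TT^{(2)}_xX-3$, and the text moves on directly to the theorem about the Kummer surface. So there is no argument in the paper to compare yours against.

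On its own merits your sketch is correct and supplies what the paper leaves implicit. The enumeration via $\dim|II_x|\in\{-1,0,1,2\}$ and the treatment of conjugate directions as the factorisation of the annihilator of $|II_x|$ in $\Sym^2T_xX$ are clean and accurate; in particular your identification in case~(3) of the conjugate pair with the zeroes of the two rank-$1$ members of the pencil, hence with the fixed points of the involution coming from nodal tangent-hyperplane sections, is exactly the intended picture. You are also right to flag the last clause of~(2) as the only substantive step: the rank-$1$ branch is immediate since $\ker II_x=\ker\gamma_{*x}$, whereas the rank-$2$ branch (non-degenerate $II_x$ forcing the osculating $\PP^3$ to be constant, hence $X\subset\PP^3$) is a genuine theorem of C.~Segre, reproved in \cite{GH79}, and not a formality; citing it, as you propose, is the appropriate move and is consistent with the paper's own reliance on those references.
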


We can now state the following theorem.

\begin{thm}
 The Kummer surface $X\subset \PP^5$ of equations (\ref{2}) is a $\Phi$ surface not covered by lines. Moreover the involution in $\PP(T_xX)$ considered in Remark \ref{quadrics} coincides with the one appearing in Proposition \ref{secondff} (3). 
\end{thm}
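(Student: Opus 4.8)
The plan is to assemble the statement from the results already collected in \S2 together with Proposition \ref{secondff}. There are really three things to verify: that $X$ has $\dim\TT^{(2)}_xX=4$ at a general point (hence is a $\Phi$ surface), that $X$ is not covered by lines, and that the two involutions on $\PP(T_xX)$ coincide.

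First I would dispose of the $\Phi$-surface claim. By the last theorem of \S2 (Dye's Theorems 3 and 4), at a point $y\in X$ that is \emph{not} an intersection of two lines on $X$ one has $\dim\TT^{(2)}_yX=4$; a general point $y$ is certainly of this type, since the $32$ lines on $X$ meet in only finitely many points. By the relation $\dim|II_x|=\dim\TT^{(2)}_xX-3$ established in \S3, this gives $\dim|II_y|=1$, so by Proposition \ref{secondff} the surface $X$ is a $\Phi$ surface. For the assertion that $X$ is not covered by lines: $X$ is a smooth complete intersection of three quadrics, hence a $K3$ surface; a surface covered by lines is uniruled, which a $K3$ surface is not (its canonical bundle is trivial, so $h^0(K_X)=1\neq 0$). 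Alternatively, and more in the spirit of the article, Theorem 2.1 says $X$ contains exactly $32$ lines, a finite number, so it cannot be covered by lines. I would state this second argument since it stays elementary and uses only the quoted results.

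The substantive point, and the one I expect to be the main obstacle, is the coincidence of the two involutions on $\PP(T_yX)\cong\PP^1$ at a general point $y$. On one side, Remark \ref{involution} (note the reference in the theorem to ``Remark \ref{quadrics}'' should point to this remark) describes an involution whose pairs of fixed-… — more precisely, whose pairs are the pairs of lines $Q\cap\TT_yX$ as $Q$ ranges over the quadrics of $\NN$ not containing $\TT_yX$, and whose fixed lines are the two tangents from $y$ to the conic $\Gamma(y):q^2-4rp=0$ of Theorem \ref{quadrics}. On the other side, Proposition \ref{secondff}(3) describes the involution on $\PP(T_yX)$ whose pairs are the tangent cones at $y$ of the singular (cuspidal) hyperplane sections through $\TT_yX$, i.e. the pencil $|II_y|$ of conics on $\PP^1$, viewed as an involution. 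So the task is to identify these two pencils of $0$-dimensional ``conics'' on $\PP^1$.

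The bridge is part (1) of the last theorem of \S2: a hyperplane $H$ cuts $X$ in a curve with a point of multiplicity $\ge 3$ at $y$ exactly when $H$ is tangent at $y$ to a quadric $Q\in\NN$ containing $\TT_yX$, i.e. to the osculating quadric $Q_y$. Dually, the generic tangent hyperplane $H$ at $y$ contains $\TT_yX$ but not $\TT^{(2)}_yX$, and the tangent cone of $H\cap X$ at $y$ is the degenerate conic recording the two tangent directions of that cuspidal section; as $H$ varies in the $\PP^1$ of tangent hyperplanes at $y$, these degenerate conics sweep out the pencil $|II_y|$, and the map $H\mapsto (\text{tangent cone})$ is exactly the dual of $II_y$ restricted modulo $\TT^{(2)}_yX$. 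Now each such $H$ is the tangent hyperplane at $y$ of a quadric $Q\in\NN$ — for a general $H$, of a \emph{non-osculating} quadric $Q$, so that $Q$ does not contain $\TT_yX$ and hence meets $\TT_yX$ in the pair of lines of Remark \ref{involution}. Since $H$ is tangent to $Q$ at $y$, the tangent cone of $H\cap X$ and the pair of lines $Q\cap\TT_yX$ define the \emph{same} unordered pair of directions in $\PP(T_yX)$: both are cut out by $Q$ near $y$ inside the tangent hyperplane. Therefore the pencil $|II_y|$ and the net-induced pencil of line-pairs agree as families of degree-$2$ divisors on $\PP^1$, so they define the same involution, and in particular their base points (the conjugate/asymptotic directions of Proposition \ref{secondff}(3)) are the two tangents from $y$ to $\Gamma(y)$, matching Remark \ref{involution}. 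I would write this identification carefully in affine coordinates centred at $y$ using the parametrization (\ref{5}) of $\TT_yX$, comparing the quadratic form $q^2-4rp$ with the second-order part of the local equations of the quadrics $Q\in\NN$; the only delicate checks are that the correspondence $H\leftrightarrow Q$ is a well-defined isomorphism of $\PP^1$'s at a general $y$ (which is (1) of the theorem plus the genericity ensuring $\TT^{(2)}_yX$ is a genuine $\PP^4$), and that ``cuspidal section'' indeed means the tangent cone is the line-pair $Q\cap\TT_yX$ counted appropriately when $Q$ is tangent to $\TT_yX$ along a line — the case of the two asymptotic lines of Edge, which corresponds precisely to the base points of $|II_y|$.
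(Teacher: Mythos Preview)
Your proposal is correct and follows essentially the same approach as the paper. The paper's own proof is extremely terse---three sentences---but the core identification is identical to yours: the hyperplane sections of $X$ singular at $y$ are cut by the tangent hyperplanes $H=\TT_yQ$ to the non-osculating quadrics $Q\in\NN$, and for such $H$ the tangent cone of $H\cap X$ at $y$ equals $Q\cap\TT_yX$; hence the pencil $|II_y|$ and Edge's pencil of line-pairs agree. Your additional remarks (the $K3$ argument for non-ruledness, the explicit bijection $H\leftrightarrow Q$, and the proposed affine verification via $q^2-4rp$) are sound but unnecessary elaborations on what the paper treats as immediate; you may safely drop the local-coordinate computation, since the geometric identification already suffices.
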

\begin{proof} The first assertion follows from the discussion in \S 2 and from Proposition \ref{secondff}. The second assertion follows because
 the pairs of lines cut on the tangent plane $\TT_xX$ by the non-osculating quadrics containing $X$ clearly coincide with the pairs of lines of the tangent cone of the hyperplane sections having $x$ as double point. Indeed these hyperplane sections are cut by the tangent hyperplanes at $x$ to the non-osculating quadrics containing $X$.
\end{proof}

\begin{cor} Let $y\in X$ and $T=\TT_yX$ be the embedded tangent plane. Assume that $y$ has coordinates given as in (\ref{3}) and $T$ is parametrized as in (\ref{5}) with parameters $p,q,r$. Then the conjugate directions at $y$ with respect to the second fundamental form  correspond to the tangent lines through $y$ to the conic $\Gamma(y)$ introduced in Theorem \ref{quadrics}.
\end{cor}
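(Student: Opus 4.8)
The plan is to obtain the statement by combining three things already in place: the theorem just proved, case (3) of Proposition \ref{secondff}, and Theorem \ref{quadrics} together with Remark \ref{involution}; all that is really needed is to line up the several incarnations of one and the same involution on $\PP(T_yX)\cong\PP^1$. First I would record that, since by the preceding theorem $X$ is a $\Phi$ surface, at the general point $y$ (and, using the results of \S2, at any $y$ not lying on a face of $\Sigma$) we are in case (3) of Proposition \ref{secondff}: the system $\mid II_y\mid$ is a pencil of quadrics on $\PP(T_yX)$, and the conjugate directions at $y$ are precisely the two fixed points of the involution $\iota$ on $\PP(T_yX)$ whose pairs are the tangent lines at $y$ to the cuspidal hyperplane sections of $X$.

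Next I would invoke the second assertion of the preceding theorem, which — after the natural identification of $\PP(T_yX)$ with the pencil of lines of centre $y$ in the embedded plane $T=\TT_yX$ — identifies $\iota$ with the involution of Remark \ref{involution}, namely the one whose pairs of lines are cut out on $T$ by the quadrics of $\NN$ not containing $T$. At this point Theorem \ref{quadrics} and Remark \ref{involution} finish the argument: in the coordinates $[p:q:r]$ of (\ref{5}) on $T$, those pairs of lines are conjugate with respect to the conic $\Gamma(y)\colon q^2-4rp=0$, so the fixed lines of $\iota$ are exactly the two tangent lines to $\Gamma(y)$ issuing from $y$ (which collapse to one line precisely when $y$ lies on one of the $32$ lines of $X$, matching the asymptotic alternative in Proposition \ref{secondff}(3)). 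Assembling the two steps, the conjugate directions at $y$ for the second fundamental form are the fixed points of $\iota$, hence correspond to the tangent lines through $y$ to $\Gamma(y)$, which is the claim.

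I expect the one point requiring genuine care to be purely the compatibility of coordinate systems, and it has two facets. One must check that the abstract $\PP(T_yX)$ of Proposition \ref{secondff} is canonically the same $\PP^1$ as the pencil of lines through $y$ in the embedded plane $T$, and that $\Gamma(y)$ and $y$ sit in the right mutual position for the conjugacy involution on that pencil to be well defined with the two tangents from $y$ as its fixed lines — routine once one recalls that $y$ has coordinates $[\xi:\eta:\zeta]$ under (\ref{5}). Secondly, Theorem \ref{quadrics} was phrased in the normalization (\ref{4}) whereas the corollary uses (\ref{3}); the two are exchanged by the diagonal projective transformation $x_i\mapsto f'(a_i)x_i$ of $\PP^5$, under which $T$, the parameters $p,q,r$, the conic $\Gamma(y)$ and the point $y$ all transform consistently, so that the incidence ``tangent line to $\Gamma(y)$ through $y$'' — and with it the whole statement — is unaffected. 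A more computational route, extracting $\mid II_y\mid$ directly from an explicit local parametrization of $X$ and computing the conic apolar to the pencil, would work as well, but it is less transparent and is not needed here.
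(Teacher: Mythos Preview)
Your proposal is correct and matches the paper's intended reasoning: the paper states the corollary with no proof, as it is meant to follow immediately from the preceding theorem (identifying the two involutions) together with Remark~\ref{involution} (the fixed lines of Edge's involution are the tangents to $\Gamma(y)$ through $y$), exactly as you spell out. Your added paragraph on the compatibility of the normalizations (\ref{3}) versus (\ref{4}) is a careful touch the paper does not make explicit, but the core argument is the same.
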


Given a $\Phi$ surface $X$, the $2$-dimensional families of conjugate directions in its points can also be characterized in terms of the differential equation of second order satisfied by a local parametrization $\phi(t_1, t_2)$ of $X$ (see \S 1). This is discussed in the classical articles (\cite{S},\cite{T1}, \cite{T2}). The integral curves of these two families of tangent directions were classically called \emph{characteristic lines} or \emph{conjugate lines} of the surface. In \cite{S1}, pag. 906, quoted by Dye in \cite{D82} and \cite{D92}, C. Segre wrote: \lq\lq\thinspace {\it Diese Fl\" achen sind diejenigen, die ein doppeltes System von Linien enthalten, analog einem konjugierten doppelten System auf einer Fl\" ache von $S_3$.}'' In my opinion this assertion was misinterpreted by Dye, who thought it meant that the $\Phi$ surfaces must be ruled, while in general the \lq\lq \thinspace characteristic lines'' are not straight lines, nor possibly algebraic curves.

To conclude, I want to mention another interesting classical construction, the Laplace transform of a $\Phi$ surface $X\subset \PP^N$ (see \cite{Da}, \cite{S}). In any point $x\in X$ there is either one pair of conjugate tangent lines or one asymptotic tangent line. So it is possible to consider the  subvarieties $Z_1, Z_2$ of dimension $2$ of the Grassmannian $\Gr(1,N)$ parametrizing the two families of tangent lines. The union of the lines of $Z_i$, $i=1,2$,  results to be a $3$-dimensional subvariety of $\PP^N$ with degenerate Gauss map. One proves  that on any general line of $Z_i$ there are two foci, counted with multiplicity (see \cite{MT}, also for the definition of  foci). One focus is the contact point of the considered tangent line $t$ with $X$, while the other, as $t$ varies in $Z_i$, describes a surface or a curve $X'$, possibly coinciding with $X$, called Laplace transform of $X$.  If $X'$ has dimension $2$, it is again a $\Phi$ surface, and $t$ is a conjugate tangent also for $X'$. So the process can be iterated, giving raise to the Laplace series of $X$. This contruction is performed in \cite{T1} and \cite{T2} in the case of rational $\Phi$ surfaces parametrized by cubics.

I think it would be interesting to determine the characteristic lines and the Laplace transforms of the Kummer surface.



\providecommand{\bysame}{\leavevmode\hbox to3em{\hrulefill}\thinspace}
\providecommand{\MR}{\relax\ifhmode\unskip\space\fi MR }
\providecommand{\MRhref}[2]{%
\href{http://www.ams.org/mathscinet-getitem?mr=#1}{#2}
}
\providecommand{\href}[2]{#2}

\end{document}